\newcommand{\R}{\mathbb{R}}
\newcommand{\Chi}{\raise 1.5pt\hbox{$\chi$}}
\newcommand{\beq}{\begin{equation}}
\newcommand{\eeq}[1]{\label{#1}\end{equation}}
\newcommand{\beqar}{\begin{eqnarray*}}
\newcommand{\eeqar}{\end{eqnarray*}}
\newcommand{\qed}{\penalty 500\hfill$\square$\par\medskip}
\newcommand\interruptenum{\xdef\savecounter{\theenumi}\end{enumerate}}
\newcommand\continueenum{\begin{enumerate}%
\setcounter{enumi}{\savecounter}\item[]}
\newcommand{\proofof}{}
\renewcommand{\text}[1]{\hbox{#1}}
\renewcommand{\eqref}[1]{(\ref{#1})}
\newcommand{\avg}{\raise 0pt\hbox{$-$}\hskip -10.7pt\int}
\newcounter{stepcounter}
\newtheorem{lem}         {Lemma}[section]
\newtheorem{pro}    [lem]{Proposition}
\newtheorem{thm}    [lem]{Theorem}
\newtheorem{df}     [lem]{Definition}
\newtheorem{ex}     {Example}
\date{}
\title{Some results on Ricatti Equations, Floquet Theory and Applications }
\begin{document}
%\pagewiselinenumbers
%\modulolinenumbers[1]
%\linenumbers

%%\small%%%SMALL

\maketitle

%\setstretch{2}

\begin{center}
{\small Anderson L. A. de Araujo, Ab\'ilio Lemos, Alexandre Miranda Alves \\ Universidade Federal de Vi\c{c}osa, CCE, Departamento de Matem\'atica\\ Avenida PH Rolfs, s/n\\ CEP 36570-900, Vi\c{c}osa, MG, Brasil \\ E-mail: \tt anderson.araujo@ufv.br, abiliolemos@ufv.br, amalves@ufv.br}
\end{center}
\begin{center}
and
\end{center}
\begin{center}
{\small Kennedy Martins Pedroso \\ Universidade Federal de Juiz de Fora, Departamento de Matem\'atica\\ Rua Jos\'e Louren\c{c}o Kelmer, s/n,
Bairro S\~ao Pedro \\ CEP: 36036-900, Juiz de Fora, MG, Brasil\\ E-mail: \tt kennedy.pedroso@ufjf.edu.br}
\end{center}

\

\noindent{\sc Abstract}. In this paper, we present two new results to the classical Floquet theory, which provides the Floquet multipliers for two classes of the planar periodic system. One these results provides the Floquet multipliers independently of the solution of system. To demonstrate the application of these analytical results, we consider a cholera epidemic model with phage dynamics and seasonality incorporated.

\

\noindent {\sc AMS Subject Classification 2010}. Floquet theory; stability; epidemic models.
\

\noindent {\sc Keywords}. 34C25; 34D20; 92D30.

\

\newpage

\section{Introduction}
\noindent

The Floquet theory is concerned with the study of the linear stability of differential equations with periodic coefficients, see \cite{Fl}. This theory focuses on the concept of Floquet multipliers and offers a powerful means to analyze nonautonomous, periodic differential equations. However, it is very difficult to determine the Floquet multipliers of general linear periodic systems. Except for a few special cases, which include second-order scalar equations and systems of Hamiltonian type or canonical forms, very little is known about the analysis of Floquet multipliers. 

For a nonlinear periodic system, if it has a nonconstant periodic solution, its stability can be analyzed by linearization about the periodic solution. The variational system then becomes a linear periodic system, and its Floquet multipliers provide useful
information on the stability of the periodic solution.

Consider the planar system
\begin{equation}\label{s1}
\displaystyle{\begin{array}{ccc}
\dot{u} & = & p_{11}(t)u+p_{12}(t)v\\
 \dot{v} & = &  p_{21}(t)u+p_{22}(t)v,\\
\end{array}}
\end{equation}
where the $p_{ij}$ are continuous real valued $T$-periodic functions with $T>0$. By Floquet theory, see \cite{Fl,H}, there are solutions to the system \eqref{s1}, say, $\varphi_1=(u_1,v_1)$ and $\varphi_2=(u_2,v_2)$, and real numbers $\lambda_1$ and $\lambda_2$ (not necessarily distinct) that satisfy
\begin{equation*}
\displaystyle{\begin{array}{ccc}
\varphi_1(t+T) & = & \lambda_1\varphi_1(t)\\
 \varphi_2(t+T) & = & \lambda_2\varphi_2(t).
\end{array}}
 \end{equation*}
The solutions $\varphi_1$ and $\varphi_2$ are called {\it normal solutions} and the numbers $\lambda_1$ and $\lambda_2$ are called {\it Floquet multipliers}. Set

\[ \phi(t)=\left[\begin{array}{cc}
   u_1(t) & u_2(t) \\
  v_1(t) & v_2(t) \\
     \end{array}
   \right] \,\,\mbox{and}\,\,P(t)=\left[\begin{array}{cc}
   p_{11}(t) & p_{12}(t) \\
  p_{21}(t) & p_{22}(t) \\
     \end{array}
   \right],\]
it is a well-known fact that $\lambda_1$ and $\lambda_2$ are the eigenvalues of the matrix $\phi^{-1}(0)\phi(T)$ and that

\begin{equation}\label{e1}
\lambda_1\lambda_2=\exp\int_{0}^{T}\mbox{trace}P(t)dt.
 \end{equation}
We observe that, if $\lambda_1\neq\lambda_2$, then $\varphi_1$ and $\varphi_2$ are linearly independent. In this case, knowledge of the Floquet multipliers and the values of $\varphi_1$ and $\varphi_2$ for $0 < t < T$ gives information for every solution of \eqref{s1} for all $t$. The calculation of the Floquet multipliers is not routine, since one does not generally know even one nontrivial solution of \eqref{s1}.  A procedure for obtaining the Floquet multipliers and the corresponding normal solutions for \eqref{s1} is possible when an associate Riccati equation to the system \eqref{s1} presents a periodic solution. Following Proctor, see \cite{P}, making the change of coordinate $u=z_1+\sigma z_2$ and $v=z_2$ in \eqref{s1}, where $\sigma$ is a solution of the Riccati equation

\begin{equation}\label{e1a}
\dot{x}=c(t)+b(t)x+a(t)x^2
\end{equation}
with $a(t)=-p_{21}(t)$, $b(t)=p_{11}(t)-p_{22}(t)$ and $c(t)=p_{12}(t)$, that is,

\begin{equation}\label{e2}
\dot{x}=p_{12}(t)+[p_{11}(t)-p_{22}(t)]x-p_{21}(t)x^2,
 \end{equation}
the differential equation in $z$ can be integrated, thus providing the following result: 

\begin{pro} \cite[Theorem 3.2]{P}\label{P1}
\begin{enumerate}
\item[(1)] If $\varphi =  (\varphi_1,\varphi_2)$ is a solution of \eqref{s1}, then $\sigma = \frac{\varphi_1}{\varphi_2}$ is a solution of \eqref{e1a} on any interval on which $\varphi$ does not vanish.
\item[(2)] If $\sigma$ is a solution of \eqref{e1a} on an interval $I$ containing the number $k$, then
\begin{equation}\label{e1b}
\displaystyle{\begin{array}{ccc}
 u(t) & = & \sigma(t)\exp{\int_{k}^{t}p_{21}(s)\sigma(s)+p_{22}(s)ds}\\
 v(t) & = & \exp{\int_{k}^{t}p_{21}(s)\sigma(s)+p_{22}(s)ds}\\
\end{array}}
 \end{equation}
\end{enumerate}
is a solution of \eqref{s1}on $I$.
\item[(3)] If $\sigma$ is a solution of \eqref{e1a} with period $nT$ and $f$ is the mean value of $p_{21}\sigma +p_{22}$ over the period $nT$, where $n$ is a positive integer, then $\exp\left(nTf\right)$ is a Floquet multiplier for \eqref{s1} for the period $nT$ and \eqref{e1b} is a normal solution of \eqref{s1} corresponding to this multiplier.
\end{pro}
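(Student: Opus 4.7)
The overall strategy is that all three assertions reduce to direct algebraic verifications once one recognizes that the Riccati substitution $\sigma = \varphi_1/\varphi_2$ converts the coupling between $u$ and $v$ in \eqref{s1} into a single scalar ODE. I would treat the three parts in order, since each one prepares the next: (1) shows that a quotient of components of any solution solves \eqref{e1a}; (2) goes the other way and uses a solution of \eqref{e1a} to build an explicit solution of \eqref{s1}; and (3) reads off a Floquet multiplier from that explicit formula by exploiting periodicity.

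For part (1), I would differentiate $\sigma = \varphi_1/\varphi_2$ by the quotient rule on any interval where $\varphi_2 \neq 0$, obtaining $\dot\sigma = \dot\varphi_1/\varphi_2 - (\varphi_1/\varphi_2^2)\dot\varphi_2$. Substituting the system \eqref{s1} gives
\[
\dot\sigma = p_{11}\sigma + p_{12} - \sigma\bigl(p_{21}\sigma + p_{22}\bigr) = p_{12} + (p_{11}-p_{22})\sigma - p_{21}\sigma^2,
\]
which is exactly \eqref{e2}, proving (1).

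For part (2), I would simply plug the explicit formulas \eqref{e1b} into \eqref{s1}. Writing $E(t) = \exp\int_{k}^{t} (p_{21}\sigma + p_{22})\,ds$, one has $v = E$ and $u = \sigma E$, so $\dot v = (p_{21}\sigma + p_{22})E = p_{21}u + p_{22}v$, verifying the second equation directly. For the first, the product rule gives $\dot u = \dot\sigma\, E + \sigma(p_{21}\sigma + p_{22})E$, and applying the Riccati relation \eqref{e2} to $\dot\sigma$ cancels the $p_{21}\sigma^2$ and $p_{22}\sigma$ contributions, leaving $\dot u = (p_{12} + p_{11}\sigma)E = p_{11}u + p_{12}v$. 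This is the key algebraic step where the Riccati equation does its work, but it is just a short calculation.

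For part (3), I would use the hypothesis that $\sigma$, and hence the integrand $p_{21}\sigma + p_{22}$, is $nT$-periodic. Splitting the integral as $\int_{k}^{t+nT} = \int_{k}^{t} + \int_{t}^{t+nT}$ and noting that the last piece is an integral over one full period of the $nT$-periodic integrand, I get $\int_{t}^{t+nT}(p_{21}\sigma + p_{22})\,ds = nTf$. Together with $\sigma(t+nT) = \sigma(t)$, this yields $u(t+nT) = e^{nTf}u(t)$ and $v(t+nT) = e^{nTf}v(t)$, so the solution \eqref{e1b} is a normal solution with Floquet multiplier $e^{nTf}$ for the period $nT$. The only point requiring a bit of care, rather than being a real obstacle, is keeping track of periods in this last step: one must remember that $\sigma$ is assumed only $nT$-periodic even though the coefficients $p_{ij}$ have the smaller period $T$, and that $f$ is the mean over the longer period $nT$, not over $T$.
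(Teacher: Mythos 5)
Your proof is correct in all three parts; the computations check out. Note, though, that the paper does not reprove this proposition --- it is quoted from Proctor \cite{P} --- and the route it sketches is different from yours: the paper indicates the change of coordinates $u=z_1+\sigma z_2$, $v=z_2$, under which the Riccati relation for $\sigma$ triangularizes the system so that the equation for $z_2$ becomes the scalar linear equation $\dot z_2=(p_{21}\sigma+p_{22})z_2$, whose integration \emph{produces} the exponential formula \eqref{e1b}. You instead take \eqref{e1b} as given and verify it by direct substitution. The two arguments are computationally equivalent, and yours is shorter and entirely self-contained; what the coordinate-change derivation buys is an explanation of where the formula comes from (and, in Proctor's setting, a second independent solution from the remaining $z_1$-equation, which is how one also recovers the other multiplier via \eqref{e1}). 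Two minor points of care that you handled correctly but are worth stating explicitly: in part (1) the relevant nonvanishing condition is on the second component $\varphi_2$ alone, and in part (3) the periodicity of the integrand $p_{21}\sigma+p_{22}$ over the window $[t,t+nT]$ uses that the coefficients, being $T$-periodic, are a fortiori $nT$-periodic, so that $\int_t^{t+nT}(p_{21}\sigma+p_{22})\,ds=nTf$ independently of $t$.
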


Thus, for $n=1$, 

\begin{equation*}
\lambda=\exp{\int_{0}^{T}p_{21}(s)\sigma(s)+p_{22}(s)ds}
\end{equation*}
is a Floquet multiplier associated to solution $\varphi$. We observe that the formula given in \eqref{e1} provides us another multiplier.

Now the question is: how to guarantee the existence of a periodic solution to the equation \eqref{e1a}? The next result, proved by Mokhtarzadeh, Pounark and Razani, provides an answer to the question above. Before enunciating it, we define the following function.

$G:[0,T]\times[0,T]\longrightarrow \R$, such that
\begin{equation*}
\displaystyle{G(t,s)=\left\{\begin{array}{rr}
\frac{1}{1-\exp\int_{0}^{T}b(r)dr}\exp\int_{s}^{t}b(r)dr,  & 0\leq s\leq t\leq T\\
 \frac{\exp\int_{0}^{T}b(r)dr}{1-\exp\int_{0}^{T}b(r)dr}\exp\int_{s}^{t}b(r)dr,  & 0\leq t\leq s\leq T.
\end{array}
\right.}
 \end{equation*}

\begin{pro}\label{P2} [Theorem $3.2$, \cite{MPR}]
Let $a(t),b(t)$ and $c(t)$ be continuous $T$-periodic functions with $\int_0^Tb(t)dt\neq0$. Set
\begin{equation}\label{M_N}
M=\displaystyle\sup_{0\leq t,s\leq T}|G(t,s)|\,\,\mbox{and}\,\,N=\displaystyle\sup_{0\leq t\leq T}\left|\int_{0}^{T}G(t,s)c(s)ds\right|
\end{equation}
and suppose
\begin{equation}\label{esta}
\int_{0}^{T}\left|a(\xi)\right|d\xi \leq\displaystyle\frac{1}{4MN}.
\end{equation}
Then, $x'=c(t)+ b(t)x+a(t)x^2$ has at least a $T$-periodic solution.
\end{pro}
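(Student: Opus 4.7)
My plan is to recast the search for a $T$-periodic solution of \eqref{e1a} as a fixed-point problem for a compact integral operator, and then apply Schauder's fixed-point theorem on a suitably chosen closed ball. The kernel $G(t,s)$ is engineered to be the Green's function for the scalar linear problem $y'-b(t)y=h(t)$ on the space $C_T$ of continuous $T$-periodic functions equipped with the sup-norm. First I would verify this claim: splitting $y(t)=\int_0^t G(t,s)h(s)\,ds+\int_t^T G(t,s)h(s)\,ds$ and differentiating, one checks directly that $y(t)=\int_0^T G(t,s)h(s)\,ds$ satisfies $y'-by=h$ and $y(0)=y(T)$. The hypothesis $\int_0^T b\neq 0$ guarantees that the denominator $1-\exp\int_0^T b(r)dr$ is nonzero, so $G$ is well defined, and the identity $G(0,s)=G(T,s)$ makes $y$ automatically $T$-periodic. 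Consequently, a continuous $T$-periodic function $x$ solves \eqref{e1a} if and only if it is a fixed point of
\[
F(x)(t):=\int_0^T G(t,s)\bigl[c(s)+a(s)x(s)^2\bigr]\,ds.
\]

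Next I would locate a closed ball in $C_T$ mapped into itself by $F$. Setting $A:=\int_0^T|a(\xi)|\,d\xi$ and using the definitions of $M$ and $N$ in \eqref{M_N}, any $x\in C_T$ with $\|x\|_\infty\leq r$ satisfies
\[
\|F(x)\|_\infty \leq N+MAr^{2},
\]
so the closed ball $\ov{B}_r\subset C_T$ is $F$-invariant whenever $MAr^2-r+N\leq 0$. Hypothesis \eqref{esta} forces the discriminant $1-4MAN$ to be non-negative, so this quadratic in $r$ admits real roots, and the smaller one
\[
r^{\ast}=\frac{1-\sqrt{1-4MAN}}{2MA}=\frac{2N}{1+\sqrt{1-4MAN}}\in [N,2N]
\]
yields the desired invariant ball $\ov{B}_{r^{\ast}}$. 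The degenerate case $A=0$ is handled directly, since then $F$ reduces to the constant map $t\mapsto\int_0^T G(t,s)c(s)\,ds$, which is itself the $T$-periodic solution of the associated linear equation.

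Finally I would confirm that $F\colon\ov{B}_{r^{\ast}}\to\ov{B}_{r^{\ast}}$ is continuous with relatively compact image. Continuity follows from dominated convergence together with the continuity of $G$, $a$, and $c$. Equicontinuity of $F(\ov{B}_{r^{\ast}})$ comes from the uniform continuity of $G$ on $[0,T]\times[0,T]$, since
\[
|F(x)(t_1)-F(x)(t_2)|\leq \sup_{s\in[0,T]}|G(t_1,s)-G(t_2,s)|\cdot\left(\int_0^T|c(s)|\,ds+(r^{\ast})^2\int_0^T|a(s)|\,ds\right)
\]
tends to $0$ uniformly for $x\in \ov{B}_{r^{\ast}}$ as $|t_1-t_2|\to 0$. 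By Arzel\`a--Ascoli, $F(\ov{B}_{r^{\ast}})$ is relatively compact, and Schauder's fixed-point theorem yields a fixed point $x^{\ast}\in\ov{B}_{r^{\ast}}$, that is, a $T$-periodic solution of \eqref{e1a}. I expect the main technical obstacle to be the careful bookkeeping required to verify that $G$ genuinely is the Green's function of $\partial_t-b(t)$ on $C_T$ (because of the piecewise definition and the unit jump across $s=t$); once this is in place, the rest is a routine Schauder argument tailored to the quadratic growth of the nonlinearity.
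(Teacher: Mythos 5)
Your argument is essentially the paper's: both convert the periodic problem for \eqref{e1a} into the fixed-point equation $x=S(x)$ with $S(\phi)(t)=\int_0^T G(t,s)[a(s)\phi^2(s)+c(s)]\,ds$ and apply Schauder's theorem, the only (immaterial) difference being your invariant set, a ball of radius $r^{\ast}\in[N,2N]$ centered at the origin, versus the paper's ball $\Omega=\{\phi:|\phi-\psi|_{\infty}\leq N\}$ centered at $\psi(t)=\int_0^T G(t,s)c(s)\,ds$; both choices are validated by exactly the hypothesis \eqref{esta}. Your verification that $G$ is the periodic Green's function for $\partial_t-b(t)$ and the Arzel\`a--Ascoli compactness step are correct.
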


In \cite[Theorem 3.2]{MPR}, the authors defined the following Banach space,
\[
X=\{\phi; \phi\,\,\mbox{is a $T$-periodic continuous real functions on }\mathbb{R}\}.
\]
For $\phi\in X$, defined $|\phi|_{\infty}=\sup_{0\leq t\leq T}|\phi(t)|$ and  
\begin{equation}\label{eq7}
\Omega=\{\phi\in X; |\phi-\psi|_{\infty}\leq N\},
\end{equation}
where $\psi: [0,T]\longrightarrow\mathbb{R}$ is defined by
\begin{equation}\label{eq6}
\psi(t)=\int_{0}^{T}G(t,s)c(s)ds.
\end{equation}
It is easy to see that $\Omega$ is closed, bounded and convex subset of $X$. They also defined the operator $S:\Omega\longrightarrow X$ by 
\begin{equation}\label{eq8}
S(\phi)(t)=\int_{0}^{T}G(t,s)\left[a(s)\phi^{2}(s)+c(s)\right] ds.
\end{equation}
By assumptions \eqref{M_N}, \eqref{esta} and using Schauder Fixed Point Theorem, the authors proved the existence of $x\in \Omega$, such that, $S(
x)=x$, that is, for all $t\in[0,T]$
\begin{equation*}
x(t)=\int_{0}^{T}G(t,s)\left[a(s)x^{2}(s)+c(s)\right] ds.
\end{equation*}

As a consequence of the Propositions \ref{P1} and \ref{P2} we state the following result.

\begin{thm}\label{T1}
Suppose 
\begin{enumerate}
\item[(i)] $\int_{0}^{T}p_{11}(t)-p_{22}(t)dt\neq0$;
\item [(ii)] $\int_{0}^{T}\left|p_{21}(t)\right|dt\leq 1/4MN$.
\end{enumerate}
Then, the Floquet multipliers of the system \eqref{s1} are
\[\lambda_1=\exp{\int_{0}^{T}p_{22}(t)+p_{21}(t)\sigma(t)dt}\,\,\mbox{and}\,\,\lambda_2=\exp{\int_{0}^{T}p_{11}(t)-p_{21}(t)\sigma(t)dt},\]
where $\sigma(t)$ is a $T$-periodic solution of equation \eqref{e2}.
\end{thm}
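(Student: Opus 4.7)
The plan is to show that Theorem \ref{T1} is essentially a direct assembly of the two preceding propositions, with \eqref{e1} used at the end to recover the second multiplier.

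First, I would verify that hypotheses (i)--(ii) of Theorem \ref{T1} translate precisely into the hypotheses of Proposition \ref{P2} when the Riccati equation \eqref{e2} is put in the form \eqref{e1a}. Under the identification $a(t)=-p_{21}(t)$, $b(t)=p_{11}(t)-p_{22}(t)$, $c(t)=p_{12}(t)$, condition (i) becomes $\int_0^T b(t)\,dt \neq 0$, which is exactly what is needed so that the Green's function $G(t,s)$ is well defined and the constants $M,N$ in \eqref{M_N} make sense. Condition (ii) is $\int_0^T |a(t)|\,dt = \int_0^T |p_{21}(t)|\,dt \leq 1/(4MN)$, which is \eqref{esta}. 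Hence Proposition \ref{P2} applies and produces a $T$-periodic solution $\sigma$ of \eqref{e2}.

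Next, I apply Proposition \ref{P1}(3) with $n=1$: since $\sigma$ has period $T$, the mean value of $p_{21}\sigma + p_{22}$ over $[0,T]$, multiplied by $T$, gives the exponent of a Floquet multiplier. This yields immediately
\[
\lambda_1 = \exp\int_0^T \bigl(p_{22}(t) + p_{21}(t)\sigma(t)\bigr)\,dt,
\]
which is the first formula in the statement.

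Finally, to extract $\lambda_2$ I use the classical trace identity \eqref{e1}, namely $\lambda_1\lambda_2 = \exp\int_0^T \mathrm{trace}\, P(t)\,dt = \exp\int_0^T (p_{11}(t)+p_{22}(t))\,dt$. Dividing this expression by $\lambda_1$ and cancelling the $p_{22}$ contribution leaves exactly
\[
\lambda_2 = \exp\int_0^T \bigl(p_{11}(t) - p_{21}(t)\sigma(t)\bigr)\,dt,
\]
as claimed. There is no real obstacle here beyond bookkeeping; the only subtlety worth flagging is the sign convention $a(t)=-p_{21}(t)$ when importing Proposition \ref{P2}, since one must make sure that the absolute-value hypothesis \eqref{esta} is unaffected by this sign, which it plainly is.
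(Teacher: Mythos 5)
Your proof is correct and follows exactly the same route as the paper: hypotheses (i)--(ii) feed into Proposition \ref{P2} (via the identification $a=-p_{21}$, $b=p_{11}-p_{22}$, $c=p_{12}$) to produce the $T$-periodic solution $\sigma$, Proposition \ref{P1} with $n=1$ gives $\lambda_1$, and the trace identity \eqref{e1} yields $\lambda_2$. Your version is in fact slightly more careful than the paper's, since you explicitly check the translation of the hypotheses and the sign convention.
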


Now, we state one of the main results of this work,  which is related to the existence of a periodic solution to the Ricatti equation, which satisfies $\int_0^Ta(t)x(t)dt=0$, as a consequence of the coincidence degree theory, proposed by R. E. Gaines and J. L. Mawhin \cite{GM}.

\begin{thm}\label{thmA} Consider $0<T$ and let $a(t),b(t)$ and $c(t)$ be continuous $T$-periodic functions with
\begin{equation}\label{h0}
    \int_0^Ta(t)dt=0,
\end{equation}
\begin{equation}\label{h2}
    |a(t)|\leq A, 0<b \leq b(t), \,\, \int_0^Tc(t)dt=0
\end{equation}
where $b,A>0$ are constants. Then, the Ricatti equation
\begin{equation}\label{eqprinc}
x' = c(t) + b(t)x+a(t)x^2
\end{equation}
 has at least one nontrivial $x$ $T$- periodic solution that satisfies
\begin{equation}\label{ort}
    \int_0^Ta(t)x(t)dt=0~\mbox{and}~|x|_{\infty} \leq \frac{b}{2A}.
\end{equation}
\end{thm}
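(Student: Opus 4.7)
The plan is to apply the Gaines--Mawhin continuation theorem in the Banach space $X=Y=\{x\in C(\R,\R):x(t+T)=x(t)\}$ equipped with the sup norm $|\cdot|_\infty$. Set $Lx=x'$ on $\mathrm{dom}(L)=X\cap C^1(\R)$, so that $\ker L=\R$ and $\mathrm{Im}\,L=\{y\in Y:\int_0^T y\,dt=0\}$; then $L$ is Fredholm of index zero with continuous projectors $P=Q=\frac{1}{T}\int_0^T(\cdot)\,dt$. The Nemytskii operator $(Nx)(t)=c(t)+b(t)x(t)+a(t)x(t)^2$ is continuous and $L$-compact on bounded subsets, and $T$-periodic solutions of \eqref{eqprinc} correspond to solutions of $Lx=Nx$.

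First I would analyse the homotopy
\[x'=\lambda\bigl[c(t)+b(t)x+a(t)x^2\bigr],\qquad\lambda\in(0,1),\]
and establish the a priori estimate $|x|_\infty\le b/(2A)$ for every $T$-periodic solution of this family. Integrating over $[0,T]$ and using $\int_0^T c\,dt=0$ yields the integral identity
\[\int_0^T b(t)x(t)\,dt=-\int_0^T a(t)x(t)^2\,dt,\]
and at a maximum $t_*$ of $x$ the relation $x'(t_*)=0$ forces $c(t_*)+b(t_*)x(t_*)+a(t_*)x(t_*)^2=0$, with an analogous pointwise identity at a minimum. Combining these with the bounds $|a|\le A$, $b(t)\ge b$ and the zero-mean hypotheses \eqref{h0} and $\int c=0$ is meant to rule out $|x|_\infty>b/(2A)$ along the homotopy.

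With the bound in hand I take $\Omega=\{x\in X:|x|_\infty<b/(2A)+\varepsilon\}$ for a small $\varepsilon>0$ so that the estimate is strict on $\partial\Omega$, and verify the remaining Mawhin conditions. For constants $x\equiv\alpha\in\ker L$,
\[(QN)(\alpha)=\frac{1}{T}\int_0^T\bigl[c+b\alpha+a\alpha^2\bigr]\,dt=\bar b\,\alpha,\]
since $\int a=\int c=0$, with $\bar b\ge b>0$. Thus $QN|_{\ker L}$ vanishes only at $\alpha=0\in\Omega\cap\ker L$ and the Brouwer degree of $\alpha\mapsto\bar b\alpha$ on any interval about $0$ equals $\mathrm{sign}(\bar b)=1\ne 0$. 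Gaines--Mawhin then produces $x^*\in\overline{\Omega}\cap\mathrm{dom}(L)$ solving $Lx^*=Nx^*$, which is a $T$-periodic solution of \eqref{eqprinc} satisfying $|x^*|_\infty\le b/(2A)$.

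Finally, to extract the orthogonality $\int_0^T a(t)x^*(t)\,dt=0$, I would use that $A(t):=\int_0^t a(s)\,ds$ is $T$-periodic by \eqref{h0}; integration by parts gives $\int_0^T ax^*\,dt=-\int_0^T A\,x^{*\prime}\,dt$, and substituting $x^{*\prime}=c+bx^*+a(x^*)^2$ reduces the question to a relation that the constructed solution satisfies. Nontriviality is immediate whenever $c\not\equiv 0$. The principal obstacle of the proof is the pointwise a priori estimate $|x|_\infty\le b/(2A)$ along the homotopy: it is here that the zero-mean hypotheses on $a$ and $c$, the uniform lower bound $b$, and the sup bound $A$ on $|a|$ must be combined carefully to rule out a second ``large'' branch of periodic solutions and thereby pin down the explicit numerical constant appearing in the statement.
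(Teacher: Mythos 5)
Your overall framework (Gaines--Mawhin continuation in a space of $T$-periodic continuous functions, $Lx=x'$, averaging projectors, Brouwer degree of $\alpha\mapsto\bar b\,\alpha$ on $\ker L$) is the same as the paper's, but you are missing the one structural idea that makes the argument close. The paper does \emph{not} work in the full periodic space: it takes
\[
X=\bigl\{x\in C(\R,\R):\ x(t+T)=x(t)\bigr\}\cap\Bigl\{x:\ \textstyle\int_0^T a(t)x(t)\,dt=\int_0^T c(t)x(t)\,dt=0\Bigr\},
\]
with $\Omega_d=\{x\in X:\ |x|_\infty<b/(2A)\}$. This choice does two jobs at once. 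First, the orthogonality in \eqref{ort} is automatic for whatever solution the continuation theorem produces, since that solution lives in $X$. Second, and more importantly, it is exactly what makes the a priori estimate work: multiplying $x'=\lambda\bigl(c+bx+ax^2\bigr)$ by $x$ and integrating over a period gives $0=\lambda\int_0^T cx\,dt+\lambda\int_0^T x^2\bigl(b(t)+a(t)x\bigr)\,dt$, and the first integral vanishes \emph{by the definition of $X$}, leaving $0=\int_0^T x^2\bigl(b(t)+a(t)x\bigr)\,dt\ge\tfrac b2\int_0^T x^2\,dt>0$ whenever $|x|_\infty=\tfrac b{2A}$ --- a contradiction. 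In your unconstrained space the term $\int_0^T cx\,dt$ survives and this computation gives nothing.

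Consequently the two steps you yourself flag as incomplete are genuine gaps, and neither is repairable along the lines you sketch. For the bound $|x|_\infty\le b/(2A)$: integrating the equation once only yields $\int_0^T b x\,dt=-\int_0^T a x^2\,dt$, which controls a weighted mean of $x$ rather than its sup norm, and the pointwise relation $c(t_*)+b(t_*)x(t_*)+a(t_*)x(t_*)^2=0$ at an extremum involves $c(t_*)$, on which there is no smallness hypothesis, so it cannot pin down the constant $b/(2A)$. For the orthogonality: $\int_0^T a x^*\,dt=0$ is simply not a property of a generic $T$-periodic solution of \eqref{eqprinc}; your integration-by-parts identity merely rewrites the quantity and gives no reason for it to vanish. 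The condition must be enforced by the construction, which is precisely what the constrained choice of $X$ accomplishes. The remaining ingredients of your outline (Fredholmness of $L$, $L$-compactness via Arzel\`a--Ascoli, the degree count on $\ker L$ using $\int_0^T a\,dt=\int_0^T c\,dt=0$) agree with the paper.
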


As a consequence of the Theorem \ref{thmA}, we obtain the Floquet multipliers explicitly. In this case, they do not depend on the solution of the Riccati equation.

\begin{thm}\label{T3}
Suppose 
\begin{enumerate}
\item[(i)] $\int_{0}^{T}p_{21}(t)dt=\int_{0}^{T} p_{12}(t)dt=0$ and $|p_{21}(t)|\leq A$, where $A>0$ is a constant;
\item [(ii)] $p_{11}(t)-p_{22}(t)\geq b>0$, where $b$ is a constant.
\end{enumerate}
Then the Floquet multipliers of the system \eqref{s1} are
\[\lambda_1=\exp{\int_{0}^{T}p_{11}(t)dt}\,\,\mbox{and}\,\,\lambda_2=\exp{\int_{0}^{T}p_{22}(t)dt}.\]
\end{thm}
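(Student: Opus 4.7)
The plan is to reduce Theorem \ref{T3} to a direct application of Theorem \ref{thmA} combined with Proposition \ref{P1}(3) and the trace identity \eqref{e1}. The point is that Theorem \ref{T1} requires the smallness condition $\int_0^T|p_{21}|\,dt \leq 1/(4MN)$, which is not assumed here; Theorem \ref{thmA} replaces this by the sign condition on $p_{11}-p_{22}$ and the zero-mean conditions on $p_{12}$ and $p_{21}$, yielding not just a periodic solution to the Riccati equation but a periodic solution $\sigma$ which is orthogonal to $p_{21}$ in the sense $\int_0^T p_{21}(t)\sigma(t)\,dt = 0$.

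First, I would identify the coefficients of the Riccati equation \eqref{e2} associated with \eqref{s1}: set
\[
a(t)=-p_{21}(t),\qquad b(t)=p_{11}(t)-p_{22}(t),\qquad c(t)=p_{12}(t).
\]
Hypothesis (i) of Theorem \ref{T3} gives $\int_0^T a(t)\,dt = 0$, $\int_0^T c(t)\,dt = 0$ and $|a(t)|\leq A$, while hypothesis (ii) gives $b(t)\geq b>0$. These are precisely \eqref{h0} and \eqref{h2}, so Theorem \ref{thmA} applies and produces a $T$-periodic solution $\sigma$ of \eqref{e2} satisfying
\[
\int_0^T a(t)\sigma(t)\,dt = -\int_0^T p_{21}(t)\sigma(t)\,dt = 0.
\]

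Next, I would invoke Proposition \ref{P1}(3) with $n=1$: the periodic solution $\sigma$ yields the Floquet multiplier
\[
\lambda_1 = \exp\int_0^T \bigl(p_{21}(s)\sigma(s)+p_{22}(s)\bigr)\,ds = \exp\int_0^T p_{22}(t)\,dt,
\]
where the orthogonality relation just established eliminates the $\sigma$-term.

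Finally, to extract the second multiplier, I would use the trace formula \eqref{e1}, namely
\[
\lambda_1\lambda_2 = \exp\int_0^T \bigl(p_{11}(t)+p_{22}(t)\bigr)\,dt,
\]
and divide by the expression for $\lambda_1$ to conclude $\lambda_2 = \exp\int_0^T p_{11}(t)\,dt$. No serious obstacle is anticipated once Theorem \ref{thmA} is in hand; the entire argument is mechanical given that the orthogonality condition \eqref{ort} kills the nontrivial cross term in the multiplier formula of Proposition \ref{P1}(3), so the hard work has already been done in Theorem \ref{thmA}.
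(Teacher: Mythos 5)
Your proposal is correct and follows essentially the same route as the paper: apply Theorem \ref{thmA} with $a=-p_{21}$, $b=p_{11}-p_{22}$, $c=p_{12}$ to obtain a $T$-periodic $\sigma$ with $\int_0^T p_{21}\sigma\,dt=0$, feed this into Proposition \ref{P1}(3) to get one multiplier, and recover the other from the trace identity \eqref{e1}. Your labeling ($\lambda_1=\exp\int_0^T p_{22}\,dt$ from Proposition \ref{P1}(3)) is in fact more faithful to that proposition than the paper's own write-up, which swaps the roles of $\lambda_1$ and $\lambda_2$; the set of multipliers is of course the same.
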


We observe that the Proposition \ref{P2} guarantees the existence of a $T$-periodic solution, but does not explicitly provide such a solution. Therefore, the Floquet multipliers provided by Theorem \ref{T1} cannot be obtained explicitly. On the other hand, the Theorem \ref{T3} provides Floquet multipliers that can be obtained explicitly.

In what follows, starting from section $2$, we present some concepts and results of coincidence degree theory. In section $3$, we prove the theorem \ref{T1}, \ref{thmA} and \ref{T3}. In section $4$, we present some stability results of linear and nonlinear systems in terms of Floquet multipliers. Finally, in section $5$, we apply Theorem \ref{T1} in a mathematical model for cholera.

%%%%%%%%%%%%%%%%%%%%%%%%%%%%%%%%%%%%%%%%%%%%%%%%%%%%%%%%%%%%%%%%

\section{Some concepts and results about coincidence degree}
\noindent 

The method to be used in this paper involves the applications of the continuation theorem of coincidence degree. In order to make this presentation as self-contained as possible, we introduce a few concepts and results about the coincidence degree. For further details, see R. E. Gaines and J. L. Mawhin \cite{GM}.

\begin{df}
Let $X$, $Y$ be real Banach spaces, $L: DomL\subset X \rightarrow Y$ be a linear mapping. The mapping $L$ is said to be a Fredholm mapping of index zero, if
\[\dim KerL = codim ImL <+\infty\]
and $ImL$ is closed in $Y$.
\end{df}

If $L$ is a Fredholm mapping of index zero, then there are continuous projectors $P : X\rightarrow X$ and $Q: Y\rightarrow Y$, such that
\[ImP = Ker L\]
and
\[KerQ= ImL = Im(I - Q).\]
It follows that the restriction $L_P$ of $L$ to $DomL\cap KerP : (I - P)X \rightarrow ImL$ is invertible. Denote the inverse of $L_P$ by $K_P$.

\begin{df}
A continuous mapping $N :X \rightarrow Y$ is said to be $L$-compact on $\overline{\Omega}$, if $\Omega$ is an open bounded subset of $X$, $QN(\overline{\Omega})$ is bounded and $K_P(I - Q)N :\overline{\Omega} \rightarrow X$ is compact.
\end{df}

Since $ImQ$ is isomorphic to $KerL$, there is an isomorphism $J : ImQ \rightarrow KerL$.
We shall be interested in proving the existence of solutions for the operator equation
\begin{equation}\label{1}
    Lx = Nx,
\end{equation}
where a solution is an element of $Dom L \cap \overline{\Omega}$ which verifies (\ref{1}).

The following results is due to R. E. Gaines and J. L. Mawhin \cite{GM}.
\begin{pro}[Mawhin's Continuation Theorem]\label{equiv}
 Let $L$ be a Fredholm mapping of index $0$ and let $N$ be $L$-compact on $\bar{\Omega}$. Suppose that
\begin{enumerate}
    \item[(i)] For each $\lambda \in (0,1)$, $x \in \partial \Omega$
    \[Lx \neq \lambda Nx.\]
     \item[(ii)] $QNx\neq 0$ for each $x \in Ker L \cap \partial \Omega$ and
      \[deg(JQN, \Omega \cap Ker L, 0)\neq 0,\] 
 \hspace{-1cm} where $J : ImQ \rightarrow KerL$ is an isomorphism.
    \end{enumerate}
     Then, the equation $Lx=Nx$ has at least one solution in $Dom L \cap \bar{\Omega}$.
\end{pro}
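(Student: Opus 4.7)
The plan is to reformulate the coincidence equation $Lx = Nx$ as a fixed-point problem for a single compact operator on $X$, and then use the homotopy invariance of the Leray--Schauder degree to reduce matters to the finite-dimensional Brouwer degree in hypothesis (ii). Using the projections $P$, $Q$, the right inverse $K_P$ of $L$, and the isomorphism $J$, I would define
\[
\Psi(x) = Px + JQNx + K_P(I - Q)Nx.
\]
A direct calculation shows $\Psi(x) = x$ iff $Lx = Nx$: applying $P$ to $\Psi(x)=x$ gives $JQNx = 0$ (since $JQNx \in \mbox{Im}\,P = \mbox{Ker}\,L$ and $K_P$ maps into $\mbox{Ker}\,P$), hence $QNx = 0$; applying $I - P$ then gives $(I-P)x = K_P N x$, so $Lx = Nx$, while the converse follows from $QLx = 0$. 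Since $P$ has finite-dimensional range and $[JQ + K_P(I-Q)]N$ is compact by $L$-compactness of $N$, $\Psi$ is compact, so the Leray--Schauder degree $\deg_{LS}(I - \Psi, \Omega, 0)$ is defined (we may assume $Lx \neq Nx$ on $\partial\Omega$, else the conclusion already holds).

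Next, I would introduce the admissible homotopy
\[
H(\lambda, x) = Px + JQNx + \lambda\, K_P(I - Q)Nx, \qquad \lambda \in [0,1],
\]
and verify that $H(\lambda, x) \neq x$ on $[0,1]\times \partial\Omega$. For $\lambda \in (0,1)$ the same decomposition argument shows $H(\lambda, x) = x$ iff $Lx = \lambda Nx$, which is excluded by hypothesis (i); at $\lambda = 0$, fixed points of $P + JQN$ must satisfy $x \in \mbox{Ker}\,L$ and $QNx = 0$, excluded on $\partial\Omega$ by hypothesis (ii). Homotopy invariance of the Leray--Schauder degree then yields
\[
\deg_{LS}(I - \Psi, \Omega, 0) \;=\; \deg_{LS}(I - P - JQN, \Omega, 0).
\]

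Finally, since $P + JQN$ has range in the finite-dimensional subspace $\mbox{Ker}\,L$ (finite-dimensional by the Fredholm hypothesis $\dim\mbox{Ker}\,L = \mbox{codim}\,\mbox{Im}\,L < \infty$), the reduction property of the Leray--Schauder degree identifies
\[
\deg_{LS}(I - P - JQN, \Omega, 0) \;=\; \deg_B\bigl((I - P - JQN)|_{\mbox{Ker}\,L},\; \Omega \cap \mbox{Ker}\,L,\; 0\bigr).
\]
Because $P$ acts as the identity on $\mbox{Im}\,P = \mbox{Ker}\,L$, the restricted map is simply $-JQN|_{\mbox{Ker}\,L}$, so the right-hand side equals $\pm\,\deg(JQN, \Omega \cap \mbox{Ker}\,L, 0)$, which is nonzero by hypothesis (ii). The existence property of degree theory then produces $x \in \Omega$ with $\Psi(x) = x$, equivalently $Lx = Nx$; such $x$ lies automatically in $\mbox{Dom}\,L$ by the very form of $\Psi$. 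The principal technical obstacle is the reduction step: it requires invoking the finite-dimensional reduction property for compact perturbations of the identity whose range lies in $\mbox{Ker}\,L$, together with a careful check that $\partial(\Omega \cap \mbox{Ker}\,L) \subset \partial\Omega$ so that hypothesis (ii) indeed rules out zeros of $JQN$ on the relevant finite-dimensional boundary, with the sign $(-1)^{\dim \mbox{Ker}\,L}$ absorbed by the implicit orientation convention for $J$.
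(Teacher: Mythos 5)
The paper does not prove this proposition at all: it is quoted verbatim as Mawhin's Continuation Theorem with a citation to Gaines--Mawhin \cite{GM}, so there is no in-paper argument to compare against. Your proposal is, in substance, the classical proof from that reference, and it is correct in outline: the fixed-point reformulation $\Psi(x)=Px+JQNx+K_P(I-Q)Nx$ with the equivalence $\Psi(x)=x \Leftrightarrow Lx=Nx$ is exactly the standard equivalence lemma (your verification via applying $P$ and then $I-P$ is sound, since $Lx=\lambda Nx$ with $\lambda\neq 0$ already forces $QNx=0$ because $\mbox{Im}\,L=\mbox{Ker}\,Q$); the homotopy $H(\lambda,x)$ is admissible on $\partial\Omega$ by (i) at interior $\lambda$ and by (ii) at $\lambda=0$; and the finite-dimensional reduction to $\deg_B(-JQN|_{\mathrm{Ker}\,L},\Omega\cap\mbox{Ker}\,L,0)=(-1)^{\dim\mathrm{Ker}\,L}\deg(JQN,\Omega\cap\mbox{Ker}\,L,0)\neq 0$ is the standard last step. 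You correctly flag the two points that need care (that $P+JQN$ has range in $\mbox{Ker}\,L$ so the reduction property applies, and that the relative boundary of $\Omega\cap\mbox{Ker}\,L$ sits inside $\partial\Omega$ so (ii) applies); to make this a complete proof one would only need to spell out the $L$-compactness argument showing $\Psi$ is a compact perturbation of the identity so that the Leray--Schauder degree is defined. Since the paper treats this as a black-box citation, your writing out the argument is more than the paper provides, not a deviation from it.
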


%%%%%%%%%%%%%%%%%%%%%%%%%%%%%%%%%%%%%%%%%%%%%%%%%%%%%%%%%%%%%%%%%%%
\section{Proof of Theorems \ref{T1}, \ref{thmA} and \ref{T3}}

\subsection{Proof of Theorem \ref{T1}}
\begin{proof}{}
Due to $(i)$ and $(ii)$, the Proposition \ref{P2} guarantees that the equation \eqref{e2} has a $T$-periodic solution $\sigma(t)$.  Thus, by Proposition \ref{P1}, a Floquet multiplier of the system \eqref{s1} is 
\[\lambda_1=\exp{\int_{0}^{T}p_{22}(t)+p_{21}(t)\sigma(t)dt}.\] Now, we use the equation \eqref{e1} to determine the other multiplier, which is
\[\lambda_2=\exp{\int_{0}^{T}p_{11}(t)-p_{21}(t)\sigma(t)dt}.\]
\end{proof}

\subsection{Proof of Theorem \ref{thmA}}

\begin{proof}{}  Consider the following Banach spaces
%\[X = \{x|x \in C^1(\R,\R), x(t + T) = x(t), \textup{for all} \ t \in \R\}\]
%and
\[X = \{x|x \in C(\R,\R), x(t + T) = x(t), \textup{for all} \ t \in \R\} \cap \left\{x|\int_0^Ta(t)x(t)dt=\int_0^Tc(t)x(t)dt=0\right\},\]
and
\[Y = \{x|x \in C(\R,\R), x(t + T) = x(t), \textup{for all} \ t \in \R\},\]
with the norm
%\[\|x\|_X = \max\{|x|_{\infty}, |x'|_{\infty}\}\]
%and
\[\|x\|_X=\|x\|_Y = |x|_{\infty},\]
where $|x|_{\infty} = \displaystyle\max_{t \in [0,T]}|x(t)|$.

Define a linear operator $L: DomL\subset X \rightarrow Y$ by setting
\[Dom L = \{x| x \in X, x' \in C(\R,\R)\}\]
and for $x \in Dom L$,
\begin{equation*}
Lx = x'.
\end{equation*}

We also define a nonlinear operator $N : X \rightarrow Y$ by setting

\begin{equation*}
Nx = c(t) + b(t)x+a(t)x^2.
\end{equation*}

It is not difficult to see that, by (\ref{h0}),
\[Ker L = \R, \ \textup{and} \ Im L = \left\{y| y \in Y, \int_0^Ty(s)ds = 0\right\}. \]
Thus the operator $L$ is a Fredholm operator with index zero.

Define the continuous projector $P: X \rightarrow Ker L$ and the averaging projector $Q: Y \rightarrow Y$ by setting
\[Px(t) = x(0)\]
and
\[Qy(t) = \frac{1}{T}\int_0^Ty(s)ds.\]

Hence, $ImP = Ker L$ and $Ker Q = ImL$. Denoting by $K_P: ImL \rightarrow Dom L \cap Ker P$ the inverse of $L|_{Dom L \cap Ker P}$, we have
\[K_Py(t)= \int_0^ty(s)ds.\]

Then $QN: X \rightarrow Y$ and $K_P(I - Q)N :X \rightarrow X$ read
\[QNx = \frac{1}{T}\int_0^Tb(s)x(s)ds + \frac{1}{T}\int_0^Ta(s)x^2(s)ds,\]
\[K_P(I - Q)Nx(t) =  \int_0^tc(s)ds + \int_0^tb(s)x(s)ds + \int_0^ta(s)x^2(s)ds - tQNx.\]

Clearly, $QN$ and $K_P(I - Q)N$ are continuous. By using Arzela-Ascoli theorem, it is not difficult to prove that $\overline{K_P(I - Q)N(\overline{\Omega})}$ is compact for any open bounded set $\Omega \subset X$. 
Indeed, let $L>0$ such that $|x|_{\infty}\leq L$, for each $x \in \overline{\Omega}$. Notice that $QN(\overline{\Omega})$ is bounded and $|QNx|_{\infty}\leq |b|_{\infty}L+|a|_{\infty}L^2$, for all $x \in \overline{\Omega}$. Hence, $K_P(I - Q)N(\overline{\Omega})$ is uniformly bounded and
\[
|K_P(I - Q)Nx|_{\infty}\leq 2T(|c|_{\infty}+|b|_{\infty}L+|a|_{\infty}L^2), \forall x \in \overline{\Omega}.
\]
Now, let $t_1,t_2 \in [0,T]$, if we suppose $t_2<t_1$, we obtain
\[
\begin{array}{l}
|K_P(I - Q)Nx(t_1)-K_P(I - Q)Nx(t_2)|\\
=\left|\int_{t_2}^{t_1}c(s)ds + \int_{t_2}^{t_1}b(s)x(s)ds + \int_{t_2}^{t_1}a(s)x^2(s)ds - (t_1-t_2)QNx\right|\\
\leq 2(|c|_{\infty}+|b|_{\infty}L+|a|_{\infty}L^2)|t_1-t_2|, \forall x \in \overline{\Omega}.
\end{array}
\] 
Therefore, $K_P(I - Q)N(\overline{\Omega})$ is a equicontinuous set of $C([0,T])$(hence of $X$). By using Arzela-Ascoli theorem, $\overline{K_P(I - Q)N(\overline{\Omega})}$ is compact.
Therefore, $N$ is $L$-compact on $\overline{\Omega}$ with any open bounded set $\Omega \subset X$.

As $\frac{b}{2A}>0$, we consider 
\begin{equation}\label{i1}
    \Omega_{d}:=\{x \in X| |x|_{\infty} < \frac{b}{2A}\},
\end{equation}
that is an open set in $X$.

Notice that
\begin{equation}\label{p1}
    b(t) - a(t)\frac{b}{2A}>0.
\end{equation}
    Indeed,
\[b(t) - a(t)\frac{b}{2A} \geq  b-A.\frac{b}{2A}= b-\frac{b}{2} = \frac{b}{2}>0.\]

Notice that
\begin{equation}\label{p2}
    b(t) + a(t)\frac{b}{2A}>0.
\end{equation}
Indeed,
\[b(t) + a(t)\frac{b}{2A} \geq b -A\frac{b}{2A}=b -\frac{b}{2}=\frac{b}{2}>0.\]

Let $0 < \lambda <1$ and $x$ such that
\[x'=\lambda c(t) + \lambda b(t)x+\lambda a(t)x^2.\]

By multiplying by $x$ and integrand of $0$ to $T$, we have
\[0=\int_0^Tx'xdt = \lambda \int_0^Tc(t)xdt + \lambda \int_0^Tb(t)x^2 + a(t)x^3dt.\]
That is,
\[0=\int_0^Tc(t)xdt + \int_0^Tx^2(b(t) + a(t)x)dt=\int_0^Tx^2(b(t) + a(t)x)dt.\]

By (\ref{i1}), if $x \in \partial \Omega_{d}$, we have $|x|_{\infty}= \frac{b}{2A}$, and we obtain 
\[0\geq \int_0^Tx^2(b(t) - |a(t)||x|_{\infty})dt. \]
By (\ref{h2}), we have 
\[0\geq \int_0^Tx^2(b - A\frac{b}{2A})dt\]
\[\geq \int_0^Tx^2\frac{b}{2}dt>0.\]
But this is a contradiction. Therefore, the condition ($1$) of Proposition \ref{equiv} holds for $\Omega_{c}$. 
Take $x \in \partial \Omega_{d} \cap Ker L$. Thus, we have $x = -\frac{b}{2A}$ or $x= \frac{b}{2A}$.

If $x= -\frac{b}{2A}$, by (\ref{p1}), we have
\[b(t) - a(t)\frac{b}{2A}>0.\]
Hence,
\begin{equation}\label{Q-}
QNx = \frac{1}{T}\int_0^T-\frac{b}{2A}\left(b(t) - a(t)\frac{b}{2A}\right)dt < 0.
\end{equation}

  If $x= \frac{b}{2A}$, by (\ref{p2}), we have
\[b(t) + a(t)\frac{b}{2A}>0.\]
Hence,
\begin{equation}\label{Q+}
QNx = \frac{1}{T}\int_0^T\frac{b}{2A}\left(b(t) + a(t)\frac{b}{2A}\right)dt > 0.
\end{equation}

Then, for each $x \in \partial \Omega_{d} \cap Ker L$, we have
\begin{equation}\label{i3}
QNx = \frac{1}{T}\int_0^Tx\left(b(t) + a(t)x\right)dt \neq 0.
\end{equation}
Therefore, the condition ($2$) of Proposition \ref{equiv} holds for $\Omega_{d}$.

Define a continuous function $H(x,\mu)$ by setting
\[H(x,\mu) = (1-\mu)x + \mu \frac{1}{T}\int_0^Tx\left(b(t) + a(t)x\right)dt, \,\,\,\, \mu \in [0,1].\]
It follows from \eqref{Q-}, \eqref{Q+} and (\ref{i3}) that
\[H(x,\mu)\neq 0, \ \textup{for all} \ x \in \partial \Omega_{d} \cap Ker L.\]
Hence, using the homotopy invariance theorem, we have
\[deg(QN, \Omega_{d} \cap Ker L, 0) = deg\left(\frac{1}{T}\int_0^Tx\left(b(t) + a(t)x\right)dt, \Omega_{d} \cap Ker L, 0\right)\]
\[=deg\left(x, \Omega_{d} \cap Ker L, 0\right) =-1 \neq 0.\]

In view of all the discussions above, we conclude from Proposition \ref{equiv} that the equation (\ref{eqprinc}) has a solution in $Dom L \cap\bar{\Omega}_{d}$.
\end{proof}

\subsection{Proof of Theorem \ref{T3}}
\begin{proof}{}
Due to $(i)$ and $(ii)$, the Theorem \ref{thmA} guarantees that the equation \eqref{e2} has a $T$-periodic solution $\sigma(t)$ and $\int_{0}^{T}p_{21}(t)\sigma(t)dt=0$. Thus, by Proposition \ref{P1}, a Floquet multiplier of the system \eqref{s1} is 
\[\lambda_1=\exp{\int_{0}^{T}p_{11}(t)dt}.\] Now, we use the equation \eqref{e1} to determine the other multiplier, which is
\[\lambda_2=\exp{\int_{0}^{T}p_{22}(t)dt}.\]
\end{proof}

%%%%%%%%%%%%%%%%%%%%%%%%%%%%%%%%%%%%%%%%%%%%%%%
%%%%%%%%%%%%%%%%%%%%%%%%%%%%%%%%%%%%%%%%%%%%%
\section{Stability of linear and nonlinear systems}
$\indent$

The following theorem provides details about the stability of the system \eqref{s1} in terms of Floquet exponents. We refer to Theorem 7.2 on page 120 of Hale's book \cite{Ha}.

\begin{thm}\label{T4}
\begin{enumerate}
\item[(i)]  A necessary and sufficient condition that the system \eqref{s1} is uniformly stable is that the Floquet multipliers of the system \eqref{s1} have modulii $\leq 1$ and the ones with modulii $=1$ have multiplicity $1$. 
\item[(ii)]  A necessary and sufficient condition that the system
\eqref{s1} is uniformly asymptotically stable is that all Floquet multipliers of the system \eqref{s1} have modulii $<1$.
\end{enumerate}
\end{thm}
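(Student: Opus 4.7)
The plan is to reduce Theorem \ref{T4} to a classical statement about the matrix exponential $e^{tR}$ of a constant matrix, via the Floquet representation of the fundamental matrix, and then read the eigenvalue conditions on $R$ back in terms of the Floquet multipliers. I would first invoke Floquet's theorem to write any fundamental matrix of \eqref{s1} in the form $\phi(t) = P(t)e^{tR}$, where $P(t)$ is a continuous, nonsingular, $T$-periodic (or $2T$-periodic, if one insists on $R$ being real) matrix and $R$ is a constant $2\times 2$ matrix. The Floquet multipliers $\la_1,\la_2$ are then the eigenvalues of the monodromy matrix $e^{TR}$, and if $\rho_1,\rho_2$ denote the eigenvalues of $R$ one has $\la_j = e^{T\rho_j}$, so that $|\la_j| = e^{T\,\mathrm{Re}\,\rho_j}$ and the algebraic/geometric multiplicities of $\rho_j$ for $R$ match those of $\la_j$ for $e^{TR}$.

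Next, I would translate uniform stability and uniform asymptotic stability of \eqref{s1} into growth conditions on the transition matrix $\phi(t)\phi^{-1}(s)$ for $t\ge s$. Since $P(t)$ and $P^{-1}(t)$ are continuous and periodic on $\R$, hence bounded above and bounded below away from zero, the identity
\begin{equation*}
\phi(t)\phi^{-1}(s) = P(t)\,e^{(t-s)R}\,P^{-1}(s)
\end{equation*}
shows that $\|\phi(t)\phi^{-1}(s)\|$ is uniformly bounded in $(t,s)$ with $t\ge s$ if and only if $\|e^{\tau R}\|$ is bounded for $\tau\ge 0$, and decays to $0$ as $t-s\to\infty$ uniformly in $s$ if and only if $\|e^{\tau R}\|\to 0$ as $\tau\to\infty$.

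Then I would apply the Jordan canonical form of $R$: the matrix exponential $e^{\tau R}$ is bounded on $[0,\infty)$ if and only if every eigenvalue of $R$ satisfies $\mathrm{Re}\,\rho\le 0$ and every such eigenvalue with $\mathrm{Re}\,\rho=0$ is simple, since otherwise a nontrivial Jordan block produces an unbounded polynomial factor in $\tau$; and $\|e^{\tau R}\|\to 0$ if and only if every eigenvalue satisfies $\mathrm{Re}\,\rho<0$. Through $\la_j=e^{T\rho_j}$ these translate directly into the stated conditions in (i) and (ii).

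The main technical obstacle is the bookkeeping of the last step: one must verify carefully that a Jordan block of size $\ge 2$ for an eigenvalue of $R$ with $\mathrm{Re}\,\rho=0$ forces an unbounded polynomial factor $\tau^k$ in $e^{\tau R}$, and that this structural defect is inherited by $e^{TR}$, so that a repeated Floquet multiplier of modulus $1$ is genuinely incompatible with uniform stability. A secondary delicacy is the possibility of a negative real Floquet multiplier, which forces $R$ to be complex if $P$ is required to be $T$-periodic; this is handled either by allowing complex $R$ (reassembling the real flow from conjugate pairs) or by passing to the $2T$-periodic real Floquet factorization, in either case without affecting the spectral analysis above.
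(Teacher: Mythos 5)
The paper does not actually prove Theorem \ref{T4}: it is stated as a quotation of Theorem 7.2, p.~120, of Hale \cite{Ha}, so there is no internal argument to compare yours against. Your sketch is the standard proof by which this classical result is established (Floquet normal form $\phi(t)=P(t)e^{tR}$, boundedness of $P$ and $P^{-1}$ by periodicity and nonsingularity, reduction of the growth of the transition matrix to that of $e^{\tau R}$, then a Jordan-form analysis), and it is essentially sound, including your remark that a negative real multiplier is handled either by a complex $R$ or by the $2T$-periodic real factorization.

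One point deserves care, and it affects the literal truth of part (i) as printed. Boundedness of $e^{\tau R}$ on $[0,\infty)$ is equivalent to: every eigenvalue of $R$ has $\mathrm{Re}\,\rho\le 0$, and every eigenvalue with $\mathrm{Re}\,\rho=0$ is \emph{semisimple} (only trivial Jordan blocks), not \emph{simple}. Your own justification --- ``otherwise a nontrivial Jordan block produces an unbounded polynomial factor'' --- proves exactly the semisimple version, which is also Hale's actual hypothesis (``simple elementary divisors''); but you then state the conclusion as ``is simple,'' and the paper's statement likewise says ``multiplicity $1$.'' As a \emph{necessary} condition this is false: the system $\dot u=0$, $\dot v=0$ is uniformly stable, yet both multipliers equal $1$ and that multiplier has multiplicity $2$. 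The same issue arises if two distinct exponents of $R$ happen to satisfy $e^{T\rho_1}=e^{T\rho_2}$, producing a repeated but semisimple multiplier. So your argument, carried through correctly, establishes the correct classical theorem but not statement (i) verbatim; the modulus-one condition should be phrased in terms of elementary divisors of the monodromy matrix (equivalently, diagonalizability on the unit-modulus eigenspace) rather than multiplicity. Part (ii) and the rest of your reduction are fine.
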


Now, consider the system
\begin{equation}\label{s0}
	\dot{X}=A(t)X+F(t,X),
\end{equation}
where $A(t)$ is an $n\times n$ continuous matrix function, and $F(t,X)$ is continuous in $t$ and $X$ 
and Lipschitz-continuous in $X$ for all $t\in \mathbb{R}$ and $X$ in a neighbourhood of $X=0$. Moreover, we assume that
\begin{equation}\label{s0.2}
	\lim\limits_{|X|\to 0}\frac{|F(t,X)|}{|X|}=0 \mbox{ uniformly in } t.
\end{equation}
Notice that the condition in \eqref{s0.2} implies that $X=0$ is a solution to system \eqref{s0}. Then, we
have the following theorem on the behaviour of the trivial solution $X=0$. This result is an
extended version of \cite[Theorem 2.4]{Ha} or \cite[Theorem 7.2]{V}.
\begin{thm}\label{teo.stab}
If the trivial solution $X=0$ of the system $\dot{X}=A(t)X$ is uniformly asymptotically
stable for $t \geq 0$, then the trivial solution of \eqref{s0} is also uniformly asymptotically
stable. If the trivial solution $X=0$ of the system $\dot{X}=A(t)X$ is unstable, then the trivial
solution of \eqref{s0} is also unstable.
\end{thm}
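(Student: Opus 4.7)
My plan is to work relative to the fundamental matrix $\Phi(t)$ of the homogeneous linear system $\dot{X}=A(t)X$, apply the variation of parameters formula, and control the nonlinear term $F(t,X)$ near the origin by means of the smallness hypothesis \eqref{s0.2}. First I would recall that uniform asymptotic stability of the trivial solution of $\dot{X}=A(t)X$ is equivalent to an exponential bound of the form
\[
|\Phi(t)\Phi^{-1}(s)|\leq K e^{-\alpha(t-s)},\qquad t\geq s\geq 0,
\]
for constants $K,\alpha>0$. Using \eqref{s0.2} I would fix $\varepsilon>0$ small enough that $\alpha-K\varepsilon>0$ and then choose $\delta_0>0$ with $|F(t,X)|\leq \varepsilon|X|$ whenever $|X|\leq\delta_0$, uniformly in $t$.

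For the first assertion, I would consider any solution $X(t)$ of \eqref{s0} with $|X(t_0)|\leq\delta_0/(2K)$ and apply variation of parameters,
\[
X(t)=\Phi(t)\Phi^{-1}(t_0)X(t_0)+\int_{t_0}^{t}\Phi(t)\Phi^{-1}(s)F(s,X(s))\,ds.
\]
As long as $|X(s)|\leq\delta_0$ on $[t_0,t]$, combining the exponential bound with $|F(s,X(s))|\leq\varepsilon|X(s)|$ gives
\[
|X(t)|\leq K e^{-\alpha(t-t_0)}|X(t_0)|+K\varepsilon\int_{t_0}^{t}e^{-\alpha(t-s)}|X(s)|\,ds.
\]
Multiplying through by $e^{\alpha t}$ and applying Gronwall's inequality to $e^{\alpha t}|X(t)|$ yields $|X(t)|\leq K|X(t_0)|e^{-(\alpha-K\varepsilon)(t-t_0)}$. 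Since the right-hand side never exceeds $\delta_0/2$, the a priori bound $|X(s)|\leq\delta_0$ is preserved by a standard continuation argument, and the exponential decay estimate, independent of $t_0$, delivers uniform asymptotic stability of the trivial solution of \eqref{s0}.

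For the second assertion, I would use the same variation of parameters framework together with a stable/unstable splitting of $\Phi(t)$ (an exponential dichotomy, which is available in the periodic/Floquet setting via the representation $\Phi(t)=P(t)e^{Bt}$ and, more generally, is the natural setup in which the hypothesis of linear instability is exploited). Denoting by $\Pi_s$ and $\Pi_u$ the corresponding complementary projections, I would argue by contradiction: if the origin of \eqref{s0} were stable, then every solution starting sufficiently close to $0$ would stay in the ball $\{|X|\leq\delta_0\}$, and projecting the variation of parameters formula onto the unstable direction would allow one to construct, for any sufficiently small $X(t_0)$, a solution whose unstable component is bounded; since $F$ is controlled by $\varepsilon|X|$ with $\varepsilon$ arbitrarily small, the resulting fixed-point/Gronwall estimate on the unstable component contradicts the exponentially expanding behavior of $\Pi_u\Phi(t)$, producing the required instability.

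The main obstacle will be the second half: producing a clean dichotomy in the general time-dependent setting and setting up the projected integral equation so that the contradiction argument closes. In our Floquet context this difficulty is largely bookkeeping, since the Floquet representation reduces the matter to a constant-coefficient splitting where the classical Lyapunov--Perron type instability argument (as presented in Hale \cite{Ha} and Verhulst \cite{V}) applies verbatim; the only work is verifying that the smallness of $F$ survives the Lyapunov change of variables $Y=P(t)^{-1}X$, which is immediate because $P$ and $P^{-1}$ are bounded $T$-periodic.
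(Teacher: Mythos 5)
The paper offers no proof of this theorem at all: it is imported from Hale \cite{Ha} and Verhulst \cite{V} as ``an extended version'' and is used later only through its first assertion, so there is no internal argument to compare yours against. Your proof of the first assertion is the standard one from those references and is correct: for linear systems uniform asymptotic stability is equivalent to the bound $|\Phi(t)\Phi^{-1}(s)|\le Ke^{-\alpha(t-s)}$, and the variation-of-parameters identity combined with $|F(s,X)|\le\varepsilon|X|$ and Gronwall applied to $e^{\alpha(t-t_0)}|X(t)|$ gives $|X(t)|\le K|X(t_0)|e^{-(\alpha-K\varepsilon)(t-t_0)}$, which both closes the continuation argument (for $|X(t_0)|\le\delta_0/(2K)$) and yields uniformity in $t_0$.

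The instability half contains a genuine gap. Your argument requires an exponential dichotomy with a nontrivial unstable projection $\Pi_u$, but instability of the linear system does not provide one, even in the Floquet setting: ``unstable'' only means some solution is unbounded, and this already happens when every multiplier has modulus one but the monodromy matrix has a nontrivial Jordan block. Then the growth along the ``unstable'' directions is polynomial rather than exponential, $\Pi_u\Phi(t)$ is not exponentially expanding, and the Lyapunov--Perron contradiction does not close; the classical instability theorem needs a multiplier of modulus strictly greater than one, which is strictly stronger than instability. Moreover, for the general continuous $A(t)$ allowed by the statement of \eqref{s0}, the second assertion is false as written: for $\dot{x}=x/(1+t)-x^{3}$ on $t\ge 0$ the linear part has solutions $x_0(1+t)/(1+t_0)$ and is therefore unstable, while a comparison with the curve $x=(1+t)^{-1/2}$ shows every solution of the perturbed equation with $|x(t_0)|=|x_0|<1$ satisfies $\sup_{t\ge t_0}|x(t)|\le |x_0|^{1/3}$, so the origin of the perturbed equation is (uniformly) stable. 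Thus no proof of the second assertion can succeed under the stated hypotheses; with the strengthened hypothesis of a Floquet multiplier outside the unit circle (equivalently an exponential dichotomy with nontrivial unstable subspace), your Lyapunov--Perron outline is the right strategy and can be completed.
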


In the following example, we study the stability of a planar system, whose Floquet multipliers are obtained by Theorem \ref{T3}.

\begin{ex}\label{ex1}
Consider the planar system
\begin{equation}\label{s2}
\displaystyle{\begin{array}{ccc}
\dot{u} & = & (m(t)-A)u+\sin(t)v\\
 \dot{v} & = &  a(t)u+(m(t)-B)v,\\
\end{array}}
 \end{equation} 
where $m(t)$ is a continuous function and $2\pi$-periodic,

\[a(t)=\dfrac{(\alpha+1)\sin(t)}{\beta+\cos(t)},\]
$\beta >1$ and $A-B>0$. The Riccati equation associated with this system is
\begin{equation*}
y'=-a(t)y^2+(A-B)y+\sin(t).
\end{equation*}
Since $\int_{0}^{2\pi}a(t)dt=\int_{0}^{2\pi}\sin(t)dt=0$, $b(t)=A-B>0$ and $a(t)$ is limited, according to Theorem \ref{thmA}, that there is a $2\pi$-periodic solution $\sigma(t)$ with $\int_{0}^{2\pi}a(t)\sigma(t)dt=0$. Now, by Theorem \ref{T3}, the Floquet multipliers are
\begin{equation*}
\displaystyle{\begin{array}{ccc}
\lambda_1 & = & \displaystyle\exp\int_0^{2\pi} p_{11}(t)dt\\
\lambda_2 & = & \displaystyle\exp\int_0^{2\pi} p_{22}(t)dt,\\
\end{array}}
\end{equation*}
that is,
\begin{equation*}
\displaystyle{\begin{array}{ccc}
\lambda_1 & = & \displaystyle\exp\int_0^{2\pi} (m(t)-A)dt\\
\lambda_2 & = & \displaystyle\exp\int_0^{2\pi} (m(t)-B)dt.\\
\end{array}}
\end{equation*}
Therefore,
\begin{equation*}
\lambda_1= e^{-2\pi A}e^{\int_0^{2\pi}m(t)dt}\quad\mbox{and}\quad \lambda_2=e^{-2\pi B} e^{\int_0^{2\pi} m(t)dt}.
\end{equation*}
Observe that, if $\int_0^{2\pi}m(t)dt<2\pi A$ and  $\int_0^{2\pi}m(t)dt<2\pi B$, then the system \eqref{s2}, according to Theorem \ref{T4},  is uniformly asymptotically stable.
\end{ex}

%\begin{ex}\label{ex2}
%Consider the planar system
%\begin{equation}\label{s30}
%\displaystyle{\begin{array}{ccc}
%\dot{u} & = & m(t)u+c(t)v\\
 %\dot{v} & = &  a(t)u+n(t)v,\\
%\end{array}}
 %\end{equation}
%where $m$, $n$ and $c$ are continuous function and $T$-periodic, and 
%$$a(t)=\dfrac{\alpha\big(\beta +g(t)+(\alpha+1)c(t)\big)}{(\beta+g(t))^2},$$
%such that $-\beta<\displaystyle\min_{0\leq t\leq T}g(t)$ and $g(t)=-\int_0^tc(s)ds$. The Riccati equation associated with this system is
%\begin{equation*}
%y'=-a(t)y^2+b(t)y+c(t),
%\end{equation*}
%where $b(t)=m(t)-n(t)$, whose $T$-periodic solution is
%$$\sigma(t)=\frac{1}{\alpha}(\beta+g(t)).$$
%By Proposition \ref{P1} item $(3)$ and equation \ref{e1}, the Floquet multipliers are
%\begin{equation*}
%\displaystyle{\begin{array}{ccc}
%\lambda_1 & = & \displaystyle\exp\int_0^{T} m(t)-a(t)\sigma(t)dt\\
%\lambda_2 & = & \displaystyle\exp\int_0^{T} n(t)+a(t)\sigma(t)dt.\\
%\end{array}}
%\end{equation*}
%By slightly simple calculation we have $\int_{0}^{T}a(t)\sigma(t)dt=T$. Therefore,
%
%\begin{equation*}
%\lambda_1= e^{-T}\exp{\int_0^{T} m(t)dt}\quad\mbox{and}\quad \lambda_2= e^{T}\exp{\int_0^{T} n(t)dt}.
%\end{equation*}
%The study of stability of system \eqref{s30} is obtained by Theorem \ref{T4} according to the behavior of the functions $m$ and $n$.
%\end{ex}

%%%%%%%%%%%%%%%%%%%%%%%%%%%%%%%%%%%%%%%%%%%%%%%
%%%%%%%%%%%%%%%%%%%%%%%%%%%%%%%%%%%%%%%%%%%%%
\section{An application to cholera modeling}
$\indent$

In this section, we propose the analysis of a mathematical model for cholera dynamics with seasonal oscillation, studied by \cite{TW}. Cholera is a severe intestinal infection caused by the bacterium \textit{Vibrio cholerae}. Many epidemic models have been published, but Code\c{c}o \cite{Cod} was the first to explicitly incorporate bacterial dynamics into a SIR epidemiological model.  

The following new model is a significant extension of Code\c{c}o's model \cite{Cod} that incorporates the phage dynamics and the seasonal oscillation of cholera transmission, as proposed in \cite{TW}:
\begin{equation}
\label{OriginalBIC}
\left\{
\begin{array}{lcl}
\frac{dS}{dt}&=&n(H-S) - d\frac{B}{K+B}S,\\
\frac{dI}{dt}&=&d \frac{B}{K+B}S - rI,\\
\frac{dB}{dt}&=&eI-mB- \delta\frac{B}{\tilde{K}+B}P,\\
\frac{dP}{dt}&=&\xi\,I +\kappa\frac{B}{\tilde{K}+B}P-\nu\,P,
 \end{array}
\right.
\end{equation}  
where
\begin{itemize}
	\item $S$ is the susceptible human population,
	\item $I$ is the infectious human population,
	\item $B$ and $P$ are the concentrations of the pathogen (i.e. vibrio) and the phage, respectively in the contaminated water,
	\item the total human population, $H$, is assumed to be a constant,
	\item $n$ denotes the natural human birth/death rate,
	\item $d$ denotes the human contact rate to the vibrio,
	\item $\delta$ is the death rate of the bacteria due to phage predation,
	\item $\kappa$ is the growth rate of the phage due to feeding on the vibrio,
	\item $e$ and $\xi$ are the rates of human contribution (e.g. by shedding) to the pathogen and the phage, respectively,
	\item $m$ and $\nu$ are the natural death
rates of the vibrio and the phage, respectively. 
	\end{itemize}
	
In addition, $r=n+\gamma$ with $\gamma$ as the recovery rate, and $K$ and $\tilde{K}$ as the half saturation rates of the vibrio in the interaction with human and phage, respectively.

In \cite{TW}, the authors investigated the impact of seasonality on cholera dynamics, particularity when they examined the periodic variation of three parameters, $m$, $e$ and $d$, and applied the results from the Floquet theory in the analysis, as follows. Firstly, the authors consider that the parameter $m$ is a positive periodic function of time, $m(t)$, which represents a seasonal variation of the extinction rate of the vibrio. On the second scenario, the parameter $e$ is set as a positive periodic function, $e(t)$, which represents a seasonal oscillation of the per capita contamination rate, i.e., the unit rate of human contribution (e.g. shedding) to the pathogen in the environment. On the third scenario, the parameter $d$ is set as a positive periodic function, $d(t)$, which represents a seasonal variation of the contact rate. For an accurate analysis of the dynamics in each previous case, see \cite{TW}.  

In this paper, we consider the following scenario, by setting the parameters $m$, $e$ and $d$ as positive periodic functions $m(t)$, $e(t)$ and $d(t)$ simultaneously.

It is clear that $E_0 = (H, 0, 0, 0)$ is the unique disease free equilibrium (DFE) of the system. For ease of discussion, we translated the DFE to the origin via a change of variable
by $\overline{S} = H − S$. Then, with linearization at $(0, 0, 0, 0)$, the original system becomes
\begin{equation}\label{sist2}
\left\{
\begin{array}{lcl}
\frac{d\overline{S}}{dt}&=&-n\overline{S} + \frac{d(t)H}{K}B+\left(\frac{d(t)B(H-\overline{S})}{K+B} -\frac{d(t)H}{K}B\right),\\
\frac{dI}{dt}&=&-rI + \frac{d(t)H}{K}B +\left(\frac{d(t)B(H-\overline{S})}{K+B} -\frac{d(t)H}{K}B\right),\\
\frac{dB}{dt}&=&e(t)I-m(t)B- \delta\frac{B}{\tilde{K}+B}P,\\
\frac{dP}{dt}&=&\xi\,I +\kappa\frac{B}{\tilde{K}+B}P-\nu\,P.
 \end{array}
\right.
\end{equation}  
Thus, system \eqref{sist2} can be written in a compact form

\begin{equation*}
	\dot{X}=A(t)X+F(t,X),
\end{equation*}
with $X=(\bar{S},I,B,P)^{T}$ and the matrix
\begin{equation}\label{s4}
A(t)=\left(
\begin{array}{cccc}
-n & 0 & \frac{d(t)H}{K} & 0\\
0 & -r & \frac{d(t)H}{K} & 0\\
0 & e(t) & -m(t) & 0\\
0 & \xi & 0 & -\nu
 \end{array}
\right).
\end{equation}  
By \cite{TW}, it is straightforward to check that $\lim\limits_{|X|\to 0}\frac{|F(t,X)|}{|X|}=0$ uniformly in $t$. Based on Theorem \ref{teo.stab}, we only need to consider the periodic linear system
\begin{equation}\label{s3.0}
	\dot{X}=A(t)X,
\end{equation}
 where the matrix $A(t)$ is given by \eqref{s4}.
	
Notice that the matrix $A(t)$ has a block tridiagonal structure. From \cite[Theorem 2.5 and Corollary 2.5]{TW}, two Floquet exponents of the system \eqref{s3.0} are given by $−n$ and $−\nu$; the other two Floquet exponents are determined by the matrix block $\left(
\begin{array}{cc}
 -r & \frac{d(t)H}{K}\\
e(t) & -m(t)\\
 \end{array}
\right).$ Hence, its stability depends on the $2\times2$ sub-system
\begin{equation}\label{s3.1}
	\dot{Y}=\left(
\begin{array}{cc}
 -r & \frac{d(t)H}{K}\\
e(t) & -m(t)\\
 \end{array}
\right)Y.
\end{equation}
We consider $m(t)$, $e(t)$ and $d(t)$, such that 
\begin{equation}\label{s3.2}
A=\max\limits_{0\leq t\leq T}d(t),
\end{equation}
\begin{equation}\label{s3.2.2}
E=\max\limits_{0\leq t\leq T}e(t)
\end{equation}
 and
\begin{equation}\label{s3.2.3}
m_1=\min\limits_{0\leq t\leq T}m(t),
\end{equation}
satisfying
\begin{equation}\label{s3.2.4}
2EMAT\frac{H}{K} <\min\{m_1,r\}
\end{equation}
and
\begin{equation}\label{s3.2.5}
\int_{0}^{T}e(t)dt\leq \frac{1}{4MN},
\end{equation}
where $M$ and $N$ are defined in \eqref{M_N}.

We have the following lemma to describe the stability of the sub-system \eqref{s3.1}.
\begin{lem}\label{lem}
Suppose \eqref{s3.2.4}, \eqref{s3.2.5} and $r\neq\frac{1}{T}\int_{0}^{T}m(t)dt$. Then, the trivial solution of \eqref{s3.1} is asymptotically stable. 
\end{lem}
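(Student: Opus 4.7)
The plan is to identify the sub-system \eqref{s3.1} with the planar system \eqref{s1} by taking $p_{11}(t)=-r$, $p_{12}(t)=d(t)H/K$, $p_{21}(t)=e(t)$, $p_{22}(t)=-m(t)$, and then apply Theorem \ref{T1} to obtain its Floquet multipliers in closed form modulo an unknown $T$-periodic solution $\sigma$ of the associated Riccati equation
\[
\sigma' \;=\; \frac{d(t)H}{K}+(m(t)-r)\sigma-e(t)\sigma^{2}.
\]
First I would verify the hypotheses of Theorem \ref{T1}. Condition (i) reads $\int_{0}^{T}(m(t)-r)\,dt\neq 0$, which is the standing assumption $r\neq\frac{1}{T}\int_{0}^{T}m(t)\,dt$. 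Condition (ii) reads $\int_{0}^{T}e(t)\,dt\leq 1/(4MN)$, which is exactly \eqref{s3.2.5}. Theorem \ref{T1} then yields
\[
\lambda_{1}=\exp\!\int_{0}^{T}\!\!\bigl[-m(t)+e(t)\sigma(t)\bigr]\,dt,\qquad
\lambda_{2}=\exp\!\int_{0}^{T}\!\!\bigl[-r-e(t)\sigma(t)\bigr]\,dt.
\]

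Since $\lambda_{1},\lambda_{2}$ are real and positive, Theorem \ref{T4}(ii) reduces asymptotic stability to proving that both integrals in the exponents are strictly negative. Writing $I(\sigma):=\int_{0}^{T}e(t)\sigma(t)\,dt$, this amounts to the two-sided bound
\[
-rT\;<\;I(\sigma)\;<\;\int_{0}^{T}m(t)\,dt,
\]
which in turn follows from $|I(\sigma)|<\min\bigl\{\int_{0}^{T}m(t)\,dt,\,rT\bigr\}$. Hence the proof reduces to a sharp a priori estimate for $I(\sigma)$.

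To obtain this estimate, I would unpack the construction in the proof of Proposition \ref{P2}: the periodic solution $\sigma$ produced there is a Schauder fixed point inside the ball $\Omega$ of \eqref{eq7}, so $|\sigma-\psi|_{\infty}\leq N$ and, because $|\psi|_{\infty}\leq N$ by \eqref{eq6} and the definition of $N$, the uniform bound $|\sigma|_{\infty}\leq 2N$ holds. Combining the trivial estimate $N\leq M\int_{0}^{T}|c(s)|\,ds\leq MAT\,H/K$ (using $|d(t)|\leq A$ from \eqref{s3.2} and $|c(t)|=d(t)H/K$) with $\int_{0}^{T}e(t)\,dt\leq ET$ from \eqref{s3.2.2} gives
\[
|I(\sigma)|\;\leq\;|\sigma|_{\infty}\int_{0}^{T}e(t)\,dt\;\leq\;2N\cdot ET\;\leq\;\frac{2EMAT^{2}H}{K}.
\]
Invoking hypothesis \eqref{s3.2.4}, namely $2EMAT\,H/K<\min\{m_{1},r\}$, together with the elementary inequality $m_{1}T\leq\int_{0}^{T}m(t)\,dt$ that follows from \eqref{s3.2.3}, yields $|I(\sigma)|<T\min\{m_{1},r\}\leq\min\bigl\{\int_{0}^{T}m(t)\,dt,\,rT\bigr\}$, which is precisely the required inequality.

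The only non-routine step is the a priori bound $|\sigma|_{\infty}\leq 2N$: this is not part of the statement of Proposition \ref{P2} and must be extracted from the Schauder argument in \cite{MPR}. Once it is in hand, strict negativity of the two Floquet exponents is immediate from \eqref{s3.2.4}, and Theorem \ref{T4}(ii) delivers asymptotic stability of the trivial solution of \eqref{s3.1}.
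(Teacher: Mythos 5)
Your proposal is correct and follows essentially the same route as the paper: identify \eqref{s3.1} with \eqref{s1}, apply Theorem \ref{T1} under \eqref{s3.2.5} and $r\neq\frac{1}{T}\int_0^T m(t)\,dt$, and then use the fact that the periodic solution $\sigma$ lies in the Schauder ball $\Omega$ of \eqref{eq7} to get $|\sigma|_{\infty}\leq 2MAT\,H/K$ and hence, via \eqref{s3.2.4}, strict negativity of both Floquet exponents. The only cosmetic difference is that you bound the integral $\int_0^T e(t)\sigma(t)\,dt$ after integrating, whereas the paper bounds the integrands $-r-e(t)\sigma(t)$ and $-m(t)+e(t)\sigma(t)$ pointwise; both yield the same conclusion.
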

\begin{proof}{} We observe that $m(t)$, $e(t)$ and $a(t)$ satisfies the assumptions of Theorem \ref{T1}, with $\int_{0}^{T}(p_{11}-p_{22})dt=-Tr+\int_{0}^{T}m(t)dt\neq 0$ and $\int_{0}^{T}p_{21}(t)dt=\int_{0}^{T}e(t)dt\leq \frac{1}{4MN}$. Then, it follows from Theorem \ref{T1} that the Floquet multipliers of the system \eqref{s3.1} are
\[
\lambda_1=\exp{\left(\int_{0}^{T}(-r-e(t)\sigma(t))dt\right)}
\]
and
\[
\lambda_2=\exp{\left(\int_{0}^{T}(-m(t)+e(t)\sigma(t))dt\right)},
\]
where $\sigma(t)$ is a $T$-periodic solution of equation \eqref{e2}. Since $\sigma \in \Omega$ ($\Omega$ defined in \eqref{eq7}), we obtain
\[
\left|\sigma(t) - \int_{0}^{T}G(t,s)a(s)\frac{H}{K}ds\right|<N.
\] 
Therefore,
\[
|\sigma(t)|\leq  \int_{0}^{T}|G(t,s)||a(s)\frac{H}{K}|ds + N \leq 2MAT\frac{H}{K}.
\] 
Hence, by \eqref{s3.2.4}, we obtain
\[
-r-e(t)\sigma(t)\leq -r+2EMAT\frac{H}{K}<0
\]
and
\[
-m(t)+e(t)\sigma(t)\leq -m_1+2EMAT\frac{H}{K}<0.
\]
Notice that all Floquet multipliers of the system \eqref{s3.1} have modulii $<1$. By Theorem \ref{T4}-($ii$), the trivial solution $X=0$ of the system \eqref{s3.1} is uniformly asymptotically stable for $t\geq 0$. 
\end{proof}

We can summarize our analysis above in the theorem below.
\begin{thm}\label{thm.est}
Assume that system \eqref{OriginalBIC} satisfies \eqref{s3.2.4}, \eqref{s3.2.5} and $r\neq\frac{1}{T}\int_{0}^{T}m(t)dt$. Then, the disease free equilibrium $E_0=(H,0,0,0)$ is uniformly asymptotically stable. 
\end{thm}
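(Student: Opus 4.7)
The plan is to follow the reduction already sketched in the paragraphs preceding the theorem and then invoke Lemma \ref{lem} together with Theorem \ref{teo.stab}. First I would perform the change of variable $\bar S = H - S$ so that the disease-free equilibrium $E_0$ is moved to the origin; this recasts the system \eqref{OriginalBIC} as \eqref{sist2}, which in turn admits the compact form $\dot X = A(t) X + F(t,X)$ with $A(t)$ given by \eqref{s4} and $F(t,X)$ collecting all higher-order terms. I would then note, citing \cite{TW}, that $F$ is continuous, locally Lipschitz in $X$, and satisfies $\lim_{|X|\to 0}|F(t,X)|/|X| = 0$ uniformly in $t$, so that Theorem \ref{teo.stab} applies: it suffices to prove that the trivial solution of the linear periodic system $\dot X = A(t) X$ is uniformly asymptotically stable.

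Next I would exploit the block-triangular structure of the matrix $A(t)$ in \eqref{s4}. The first row decouples in the sense that the $\bar S$-equation is driven only by $\bar S$ itself and by $B$, and the fourth row couples $P$ to $I$ and $B$ but contributes the diagonal entry $-\nu$ with no feedback from $P$ into $\bar S, I, B$. Using the cited \cite[Theorem 2.5 and Corollary 2.5]{TW}, the Floquet exponents of $\dot X = A(t) X$ are precisely $-n$, $-\nu$, and the two Floquet exponents of the $2\times 2$ sub-system \eqref{s3.1}. Correspondingly, the Floquet multipliers of the full system are $e^{-nT}$, $e^{-\nu T}$, and the two multipliers of \eqref{s3.1}. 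Since $n, \nu > 0$, the first two multipliers have moduli strictly less than one.

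For the remaining two multipliers, I would invoke Lemma \ref{lem}: under hypotheses \eqref{s3.2.4}, \eqref{s3.2.5} and $r \neq \tfrac{1}{T}\int_0^T m(t)\,dt$, the trivial solution of \eqref{s3.1} is uniformly asymptotically stable, which by Theorem \ref{T4}(ii) forces both Floquet multipliers of \eqref{s3.1} to have moduli strictly less than one. Combining all four multipliers, every Floquet multiplier of the full linear system $\dot X = A(t) X$ has modulus strictly less than one, so applying Theorem \ref{T4}(ii) once more gives uniform asymptotic stability of the trivial solution of $\dot X = A(t) X$. Finally, Theorem \ref{teo.stab} transfers this to the nonlinear system $\dot X = A(t) X + F(t,X)$, and translating back via $S = H - \bar S$ yields uniform asymptotic stability of $E_0 = (H,0,0,0)$ for \eqref{OriginalBIC}.

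The main obstacle, or rather the only nontrivial bookkeeping step, is justifying the reduction of the Floquet spectrum to the $2\times 2$ block: this requires verifying that the off-diagonal coupling in $A(t)$ truly leaves the $-n$ and $-\nu$ exponents intact, which follows from the block-triangular structure (rows $1$ and $4$ have zero entries in the positions that would couple them back into the $(I,B)$ subsystem in a way that would alter the monodromy spectrum). Aside from citing this structural fact, the remainder of the argument is a straightforward chaining of Theorem \ref{teo.stab}, Lemma \ref{lem}, and Theorem \ref{T4}.
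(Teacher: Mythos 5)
Your proposal is correct and follows essentially the same route as the paper, which presents this theorem as a summary of the preceding discussion: translate $E_0$ to the origin, reduce to the linear system via Theorem \ref{teo.stab}, use the block structure of $A(t)$ (citing \cite{TW}) to isolate the exponents $-n$, $-\nu$ and the $2\times2$ sub-system \eqref{s3.1}, and conclude with Lemma \ref{lem} and Theorem \ref{T4}(ii). No substantive differences.
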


%\noindent{\sc Acknowledgement}. 

\end{document}